\newtheorem*{thm}{Theorem}
\newtheorem*{cor}{Corollary}
\newtheorem{lem}{Lemma}
\theoremstyle{definition}
\def\eq#1{{\rm(\ref{#1})}}
\def\Eq#1#2{\ifthenelse{\equal{#1}{*}}
  {\begin{equation*}\begin{aligned}[]#2\end{aligned}\end{equation*}}
  {\begin{equation}\begin{aligned}[]\label{#1}#2\end{aligned}\end{equation}}}
\def\E{\mathscr{E}}
\def\P{\mathscr{P}}
\def\O{\mathcal{O}}
\newcommand\R{\mathbb{R}}
\newcommand\N{\mathbb{N}}
\def\oR{\overline{\R}}
\title
{New bounds for the ratio of power means}
\author{Zsolt P\'ales}
\address{Institute of Mathematics, University of Debrecen, Pf.\ 400, 4002 Debrecen, Hungary}
\email{pales@science.unideb.hu}
\author{Pawe\l{} Pasteczka}
\address{Institute of Mathematics, Pedagogical University of Krak\'ow,  Podchor\k{a}\.{z}ych str 2, 30-084 Krak\'ow, Poland}
\email{pawel.pasteczka@up.krakow.pl}
\thanks{The research of the first author was supported by the EFOP-3.6.1-16-2016-00022 and the EFOP-3.6.2-16-2017-00015 projects. These projects are co-financed by the European Union and the European Social Fund.}
\keywords{H\"older means; power means; ratio bounds}
\subjclass[2010]{26E60, 26D15, 26D07}
\begin{document}
\begin{abstract}
We show that for real numbers $p,\,q$ with $q<p$, and the related power means $\mathscr{P}_p$, $\mathscr{P}_q$, the inequality
$$\frac{\mathscr{P}_p(x)}{\mathscr{P}_q(x)} \le \exp \bigg( \frac{p-q}8 \cdot \bigg(\ln\bigg(\frac{\max x}{\min x}\bigg)\bigg)^2 \:\bigg)$$
holds for every vector $x$ of positive reals. Moreover we prove that, for all such pairs $(p,q)$, the constant $\tfrac{p-q}8$ is sharp.
\end{abstract}
\maketitle

\section{Introduction}

A classical result states that if $(\P_p)_{p \in \oR}$, $\P_p \colon \bigcup_{n=1}^\infty \R_+^n \to \R$ is the extended family of power means defined by
\Eq{*}{
\P_p (v_1,\dots,v_n):= 
\begin{cases} 
\min(v_1,\dots,v_n) &\quad \text{ if }p = -\infty,\\[1mm]
\Big(\dfrac{v_1^p+\cdots+v_n^p}{n}\Big)^{1/p} &\quad \text{ if } p\in\R\setminus\{0\}, \\[2mm]                                                          
\sqrt[n]{v_1\cdots v_n} &\quad \text{ if } p = 0,\\
\max(v_1,\dots,v_n) &\quad \text{ if }p = +\infty,
\end{cases}
}
then for every nonconstant vector $v$ of positive entries, the mapping $\oR \ni p \mapsto \P_p(v) \in [\min v ,\max v]$ is a continuous bijection.
Furthermore, the three-variable ratio function $R \colon \oR \times \oR \times \bigcup_{n=1}^\infty \R_+^n \to \R$ given by $R(p,q,v):=\P_p(v)/\P_{q}(v)$ is continuous on $\oR \times \oR \times \R_+^n$ for all fixed $n\in\N$.

The well-known properties of power means imply a number of assertions concerning this function. For example, $R$ is strictly increasing in the first variable and strictly decreasing in the second variable unless $v$ is a constant vector. Yet more difficult one is due to Cargo and Shisha \cite{CarShi62}, who established the following upper estimate which is valid for all nonconstant vectors $v \in \bigcup_{n=1}^\infty \R_+^n$ and all $p,q \in \R \setminus \{0\}$ with $q<p$ 
 \Eq{CS}{
R(p,q,v) \le 
{\bigg(\dfrac{q(\gamma^p-\gamma^q)}{(p-q)(\gamma^q-1)}\bigg)^{1/p}}{\bigg(\dfrac{p(\gamma^p-\gamma^q)}{(p-q)(\gamma^p-1)}\bigg)^{-1/q}},\quad\text{where }\gamma:=\frac{\max v}{\min v}.
 }
In the most classical particular case, when $(p,q)=(1,-1)$, that is, when the means are the arithmetic and harmonic means, the above upper estimate reduces to Kantorovich's celebrated inequality \cite[pp. 142--143]{Kan48}:
\Eq{*}{
  R(1,-1,v)\leq \frac{(\gamma+1)^2}{4\gamma}, \qquad\text{where }\gamma:=\frac{\max v}{\min v}.
}
Let us underline that, as both sides are continuous functions of $p$ and $q$, we can extend this inequality to the case $pq=0$. In fact, this was done in the paper \cite{CarShi62}, but we will not use this result explicitly.

On other hand, the Taylor expansion and a few general results concerning means (for example \cite[Theorem 8.8]{BorBor87}) imply that, for (fixed) $p,q \in \R$, we have 
\Eq{*}{
\sup \{R(p,q,x) \colon \max x\leq\gamma\min x\} = 1+\O((\gamma-1)^2)
\qquad\text{for }\gamma\approx1.
}
This observation motivates us to look for upper estimations of this type.

As a corollary, we majorize the difference between two so-called exponential means. For a given parameter $p\in \oR$ a \emph{$p$-th exponential mean} $\E_p \colon \bigcup_{n=1}^\infty \R^n \to \R$ is given by
\Eq{*}{
\E_p(v_1,\dots,v_n):=
\begin{cases}
\dfrac{1}{p} \ln \bigg( \dfrac{e^{pv_1}+\dots+e^{pv_n}}{n} \bigg) & \text{ for }p \in \R \setminus\{0\}, \\[5mm]
\dfrac{v_1+\dots+v_n}n  & \text{ for }p = 0,
\end{cases}
}
with a natural limit-type extension $\E_{-\infty}:=\min$ and $\E_{+\infty}:=\max$. We underline that this family is closely related to power means, as we have the following conjugacy property
\Eq{conj}{
\E_p(v_1,\dots,v_n)=\ln \big( \P_p(\exp v_1,\dots,\exp v_n) \big) \qquad \text{ for }p \in \oR, n \in \N \text{ and } v \in\R^n.
}

\section{Results}

In this section our main result is presented which provides an upper estimate for the ratio of two power means. It is preceded by three technical although elementary lemmas. At the very end, we also derive an estimation for the difference of two exponential means.

\begin{lem}\label{sub}
Let $I \subset [0,+\infty)$, and $f \colon I \to \R$ be a nonincreasing function. Then the mapping $I \ni x \mapsto xf(x)$ is subadditive. 
\end{lem}
\begin{proof}
Fix $x,y\in I$ with $x+y\in I$. Then, as $f$ is nonincreasing, we have $f(x+y)\le f(x)$. This implies $xf(x+y)\le xf(x)$. Similarly $yf(x+y) \le yf(y)$. Summing up both of these inequalities, we obtain $(x+y)f(x+y) \le xf(x)+yf(y)$, which proves the subadditivity of the mapping $x\mapsto xf(x)$.
\end{proof}

\begin{lem}\label{subf}
The function $f \colon \R \to \R$ given by
 \Eq{E:f}{
f(x):=\begin{cases}
             \ln\bigg( \dfrac{\sinh x}x \bigg) -\dfrac{x^2}6 & x \ne 0, \\
             0 & x=0
            \end{cases}
}
is continuous on $\R$, is even, and is strictly decreasing on $[0,+\infty)$. Consequently, the mapping $[0,+\infty) \ni x\mapsto xf(x)$ is subadditive.
\end{lem}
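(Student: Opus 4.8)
The plan is to verify the three listed properties in turn and then invoke Lemma \ref{sub} for the concluding subadditivity claim. Continuity away from the origin is clear, since $\sinh x/x>0$ for every $x\ne0$, so the logarithm is well-defined and smooth there. For continuity at $0$ I would simply note that $\sinh x/x\to1$ as $x\to0$, whence $\ln(\sinh x/x)\to0$ while $x^2/6\to0$, so $f(x)\to0=f(0)$. Evenness is immediate: since $\sinh$ is odd, the quotient $\sinh x/x$ is even, and $x^2$ is even, so $f(-x)=f(x)$.

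The heart of the matter is strict monotonicity on $[0,+\infty)$, for which it suffices (by continuity) to show $f'(x)<0$ for every $x>0$. Differentiating yields
\[
f'(x)=\coth x-\frac1x-\frac x3,
\]
so the claim is equivalent to $\coth x<\tfrac1x+\tfrac x3$, and after multiplying through by $x>0$ and by $\sinh x>0$, to the inequality $3x\cosh x<(3+x^2)\sinh x$ for $x>0$.

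To establish this last inequality I would introduce the auxiliary function $\phi(x):=3x\cosh x-(3+x^2)\sinh x$, which satisfies $\phi(0)=0$. A direct computation shows that the lower-order terms cancel and leave
\[
\phi'(x)=x\bigl(\sinh x-x\cosh x\bigr).
\]
The bracketed factor vanishes at $0$ and has derivative $-x\sinh x<0$ for $x>0$, so it is negative on $(0,+\infty)$; hence $\phi'(x)<0$ there, and since $\phi(0)=0$ we conclude $\phi(x)<0$ for all $x>0$. This gives $f'(x)<0$ and therefore the strict decrease of $f$ on $[0,+\infty)$. The final assertion then follows at once from Lemma \ref{sub}, applied to the (in particular) nonincreasing function $f$ on $I=[0,+\infty)$.

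The main obstacle is precisely the global sign of $f'$: near the origin the Laurent expansion $\coth x=\tfrac1x+\tfrac x3-\tfrac{x^3}{45}+\cdots$ already reveals the correct sign, but this series converges only for $|x|<\pi$, so one cannot argue termwise on all of $(0,+\infty)$. The device of clearing denominators to reach $\phi$ and then peeling off one derivative at a time—so that the transcendental inequality collapses to the elementary fact that $\sinh x<x\cosh x$ for $x>0$—is what makes the argument go through uniformly.
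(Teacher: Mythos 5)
Your proof is correct, and its skeleton --- continuity and evenness by inspection, reduction of strict decrease to $f'(x)<0$, clearing denominators to the inequality $3x\cosh x<(3+x^2)\sinh x$ for $x>0$, and the final appeal to Lemma~\ref{sub} --- is exactly the paper's. Where you differ is in how this key inequality is established: the paper disposes of it by term-wise comparison of the Taylor series of the two entire functions $3x\cosh x$ and $(3+x^2)\sinh x$ (the coefficients of $x$ and $x^3$ agree, and from $x^5$ onward the coefficients on the $\sinh$ side are strictly larger, since $\tfrac{3}{(2n+1)!}+\tfrac{1}{(2n-1)!}>\tfrac{3}{(2n)!}$ reduces to $4n^2>4n$), whereas you introduce $\phi(x)=3x\cosh x-(3+x^2)\sinh x$ and peel off derivatives, collapsing everything to the elementary fact $\sinh x<x\cosh x$ for $x>0$; your computations of $\phi'$ and of the sign of the bracket are correct. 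Both arguments are valid on all of $(0,+\infty)$; note, though, that your closing caveat about convergence concerns the Laurent series of $\coth x$ (radius $\pi$) and does not actually impugn the paper's argument, since after clearing denominators the series being compared converge everywhere. Your route buys a calculus-only proof with no series bookkeeping; the paper's is shorter once one trusts the coefficient comparison. Incidentally, the paper states the key inequality with the two sides interchanged, namely as $(3+x^2)\sinh x < 3x\cosh x$; as your derivation (and the series expansion) shows, the direction equivalent to $f'(x)<0$ --- and the one that is actually true --- is yours, so that line of the paper contains a typo.
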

\begin{proof}
 It is easy to verify that $f$ is continuous and even. To show its strict decreasingness on $[0,+\infty)$, observe that
 \Eq{*}{
f'(x)=\frac{\cosh x}{\sinh x} - \frac{1}{x} -\frac{x}{3}.
 }
Thus, it suffices to show that $f'(x)<0$ holds for all $x \in \R_+$. Equivalently, we have to verify that $(3+x^2)\sinh x < 3x\cosh x$. This latter inequality can be proved by term-wise comparison of the corresponding Taylor series. The last assertion is implied by Lemma~\ref{sub}.
\end{proof}

\begin{lem} \label{fuu}
Let $f \colon \R \to \R$ be given by \eq{E:f}. Then 
\Eq{E:uv}{
\frac {f(q)}p-\frac {f(p)}q+ \bigg( \frac 1q - \frac 1p \bigg) f(p-q) \ge 0
}
for all $p,\,q \in \R \setminus\{0\}$ with $q \le p$. For $p \le q$, the inequality \eq{E:uv} is reversed.
\end{lem}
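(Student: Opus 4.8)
The plan is to clear denominators and recognise the resulting expression as a signed defect of subadditivity for the auxiliary function $\phi(x):=xf(x)$. Since $\tfrac1q-\tfrac1p=\tfrac{p-q}{pq}$, multiplying the left-hand side of \eq{E:uv} by $pq$ and regrouping produces
\Eq{*}{
qf(q)-pf(p)+(p-q)f(p-q)=\phi(q)+\phi(p-q)-\phi(p).
}
Setting $a:=q$ and $b:=p-q$, so that $a+b=p$, the right-hand side becomes $\phi(a)+\phi(b)-\phi(a+b)$, while the hypothesis $q\le p$ turns into $b\ge0$. Hence it suffices to prove that the sign of $\phi(a)+\phi(b)-\phi(a+b)$ agrees with that of $pq$; dividing by $pq$ then restores \eq{E:uv}.

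Two properties of $\phi$ drive the argument: it is odd, because $f$ is even, and it is subadditive on $[0,+\infty)$ by Lemma~\ref{subf}. Since $p,q\ne0$ and $q\le p$, exactly one of three cases occurs. If $0<q\le p$, then $a,b\ge0$, and subadditivity gives $\phi(a)+\phi(b)\ge\phi(a+b)$; as $pq>0$, the inequality \eq{E:uv} follows. If $q<0<p$, I would apply subadditivity to the decomposition $p-q=p+(-q)$ with $p,-q>0$ and combine it with $\phi(q)=-\phi(-q)$ to conclude that the defect $\phi(a)+\phi(b)-\phi(a+b)$ is $\le0$; since $pq<0$, \eq{E:uv} again holds. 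Finally, if $q\le p<0$, I would write $-q=(-p)+(p-q)$ with $-p,\,p-q\ge0$, apply subadditivity, and invoke oddness once more to see that the defect is $\ge0$, which together with $pq>0$ yields \eq{E:uv}.

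The reversed inequality for $p\le q$ is then immediate from symmetry: interchanging $p$ and $q$ and using $f(q-p)=f(p-q)$ shows that the left-hand side of \eq{E:uv} is antisymmetric in the pair $(p,q)$, so the case $p\le q$ reduces to the case $q\le p$ already treated. The only genuinely delicate step is the sign bookkeeping in the mixed-sign case $q<0<p$, where the oddness of $\phi$ and its subadditivity on the positive half-line must be combined in the correct order; the boundary case $p=q$ causes no trouble, since there $f(p-q)=f(0)=0$ and both sides of \eq{E:uv} vanish.
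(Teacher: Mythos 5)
Your proof is correct and takes essentially the same route as the paper: the same three-case split ($0<q\le p$, $q<0<p$, $q\le p<0$), the same decompositions $p=q+(p-q)$, $p-q=p+(-q)$, $-q=(-p)+(p-q)$, and the same key tool, namely the subadditivity of $x\mapsto xf(x)$ on $[0,+\infty)$ from Lemma~\ref{subf} combined with the evenness of $f$. Your explicit antisymmetry argument for the reversed case $p\le q$ is a slightly cleaner formulation of what the paper dismisses as ``completely analogous,'' but it is not a different method.
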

\begin{proof}
For $p=q$ this statement is trivial. For $q < p$, consider the three cases:
 \Eq{*}{
 (\alpha):\quad 0<q<p, \qquad (\beta):\quad q< 0 <p, \qquad (\gamma):\quad q<p<0\:.
 }
In each of these cases, as $f$ is even, \eq{E:uv} is equivalent to 
\begin{align}
pf(p)&\le qf(q) + (p-q) f(p-q), \tag{$\alpha$}\\
(p-q)f(p-q)&\le pf(p) + (-q) f(-q), \tag{$\beta$}\\
(-q) f(-q) &\le (-p) f(-p) + (p-q) f(p-q). \tag{$\gamma$}
\end{align}
These statements are consequences of Lemma~\ref{subf}, that is, of the subadditivity of the mapping $[0,+\infty) \ni x \mapsto xf(x)$.
The proof for the second case is completely analogous.
\end{proof}

Now we proceed to the main result of the present note.
\begin{thm}
For $p,q \in \R$ with $q\le p$, we have
\Eq{E:thm}{
\frac{\P_p(v)}{\P_q(v)} \le \exp \bigg( \frac{p-q}8 \bigg(\ln\Big(\frac{\max v}{\min v}\Big)\bigg)^2 \:\bigg) \qquad \text{ for all } v \in \bigcup_{n=1}^\infty \R_+^n.
}
Moreover, the constant $\frac{p-q}8$ in the above inequality is sharp. 
\end{thm}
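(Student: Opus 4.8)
The plan is to combine the sharp two-point estimate of Cargo and Shisha \eq{CS} with the subadditivity encoded in Lemma~\ref{fuu}, after passing to logarithms. First I would reduce to the generic case $p,q\in\R\setminus\{0\}$ with $q<p$ and nonconstant $v$: the case $q=p$ is trivial since both sides of \eq{E:thm} equal $1$, the boundary cases $pq=0$ follow because both sides are continuous in $(p,q)$ (exactly as remarked after \eq{CS}), and constant vectors give left-hand side $1$. For the generic case it suffices, by \eq{CS}, to verify the inequality for the extremal right-hand side $R_{\max}(\gamma)$ of \eq{CS}; writing $L:=\ln\gamma=\ln(\max v/\min v)$, I must show $\ln R_{\max}(\gamma)\le\frac{p-q}{8}L^{2}$.

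The key idea is to expose the hyperbolic sine hidden in \eq{CS}. Using the identities $\ln(e^{xL}-1)=\tfrac{xL}{2}+\ln\bigl(2\sinh\tfrac{xL}{2}\bigr)$ and $\ln(e^{pL}-e^{qL})=\tfrac{(p+q)L}{2}+\ln\bigl(2\sinh\tfrac{(p-q)L}{2}\bigr)$, and recalling from \eq{E:f} that $\ln\frac{\sinh x}{x}=f(x)+\frac{x^{2}}{6}$, I would rewrite $\ln R_{\max}(\gamma)$ as a sum of elementary terms (powers of $L$ together with the logarithmic terms $\ln L,\ln p,\ln q$) plus the three values $f(\tfrac{pL}{2})$, $f(\tfrac{qL}{2})$, $f(\tfrac{(p-q)L}{2})$, each carrying a coefficient $\tfrac1p$, $\tfrac1q$, or $\tfrac1q-\tfrac1p$.

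Next comes the bookkeeping: the logarithmic and the $L$-linear contributions cancel, and the $L^{2}$ contributions collapse—via $p^{3}-q^{3}-(p-q)^{3}=3pq(p-q)$—to exactly $\frac{p-q}{8}L^{2}$. What remains is precisely
\Eq{*}{
\ln R_{\max}(\gamma)=\frac{p-q}{8}L^{2}-\biggl(\frac{f(qL/2)}{p}-\frac{f(pL/2)}{q}+\Bigl(\frac1q-\frac1p\Bigr)f\bigl(\tfrac{(p-q)L}{2}\bigr)\biggr).
}
The parenthesised expression is nonnegative by Lemma~\ref{fuu} applied to the pair $(\tfrac{pL}{2},\tfrac{qL}{2})$ in place of $(p,q)$, which is legitimate because $L>0$ preserves the ordering $q<p$; this yields $\ln R_{\max}(\gamma)\le\frac{p-q}{8}L^{2}$. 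I expect this term-collection to be the main obstacle, and it is where the design of \eq{E:f} pays off: the subtracted $\frac{x^{2}}{6}$ is engineered exactly so that the elementary part produces the target coefficient $\frac{p-q}{8}$ while the residual is literally the subadditivity expression \eq{E:uv}.

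For the sharpness of $\frac{p-q}{8}$, I would analyse the regime $\gamma\to1^{+}$. Since $f(x)=O(x^{4})$ near $0$, the bracketed term above is $O(L^{4})$, whence $\ln R_{\max}(\gamma)=\frac{p-q}{8}L^{2}+O(L^{4})$ and therefore $\ln R_{\max}(\gamma)/L^{2}\to\frac{p-q}{8}$ as $L\to0$. Because $R_{\max}(\gamma)$ is attained in the limit by genuine two-point vectors $v$, for each fixed pair $(p,q)$ with $q<p$ no constant smaller than $\frac{p-q}{8}$ can keep \eq{E:thm} valid for all $v$, which establishes sharpness.
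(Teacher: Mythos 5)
Your proof of the inequality itself is correct and is essentially the paper's own argument: you pass from \eq{CS} to the hyperbolic-sine form, observe that the logarithmic and $L$-linear terms cancel while the $\tfrac{x^2}{6}$-terms collapse via $p^3-q^3-(p-q)^3=3pq(p-q)$ to $\tfrac{p-q}{8}L^2$, and you control the residual by Lemma~\ref{fuu} applied at $\big(\tfrac{pL}{2},\tfrac{qL}{2}\big)$; your displayed identity is exactly the paper's computation of $\ln K$ (the rescaling by the positive factor $\tfrac{L}{2}$ is harmless). The reductions to $pq\neq0$, $p\neq q$, and nonconstant $v$ are also handled as in the paper.

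The sharpness argument, however, has a genuine gap. What you prove is a statement about the Cargo--Shisha \emph{bound}: $\ln R_{\max}(\gamma)=\tfrac{p-q}{8}L^{2}+O(L^{4})$. This says nothing about actual ratios $\P_p(v)/\P_q(v)$ unless you also know that the bound is (at least asymptotically) attained, and that is precisely the content of your unproved assertion that ``$R_{\max}(\gamma)$ is attained in the limit by genuine two-point vectors''. Read literally, for fixed $\gamma$ this assertion is false in general: the extremal configurations for \eq{CS} are two-\emph{valued} vectors with an optimal proportion of the two values, and that proportion is not $\tfrac12$ except in special cases such as Kantorovich's $(p,q)=(1,-1)$. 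For instance, with $p=2$, $q=1$, $\gamma=4$, the bound \eq{CS} equals $\tfrac54$ and is attained by the vector $(4,1,1,1,1)$ (weight $\tfrac15$ on the larger value), while the equal-weight pair $(1,4)$ gives only $\sqrt{8.5}/2.5\approx 1.166$. A priori it could happen that all genuine vectors obey the inequality with a smaller constant even though the upper bound $R_{\max}$ does not, so the implication you draw does not follow.

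What is both true and sufficient is the asymptotic statement that some admissible vectors achieve ratio $\exp\big(\tfrac{p-q}{8}L^{2}+o(L^{2})\big)$ as $L\to0$, but this must be computed on actual vectors, not on the bound. The paper does exactly this: assuming \eq{E:thm} holds with a constant $C$ for all $v\in\R_+^2$, it tests the inequality on $v=(e^{t},e^{-t})$, which gives $\tfrac{\ln\cosh(tp)}{p}-\tfrac{\ln\cosh(tq)}{q}\le 4Ct^{2}$ for all $t\in\R$; since the left-hand side minus $4Ct^2$ vanishes at $t=0$ and is everywhere nonpositive, the second-derivative test at $t=0$ yields $p-q\le 8C$. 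You could close your gap the same way (the Taylor expansion $\tfrac{\ln\cosh(tp)}{p}-\tfrac{\ln\cosh(tq)}{q}=\tfrac{p-q}{2}t^{2}+O(t^{4})$ shows equal-weight pairs already match the constant to leading order), or alternatively by citing from \cite{CarShi62} that the bound \eq{CS} is best possible over vectors of all dimensions --- but some such step must be supplied; as written, the link between $R_{\max}$ and genuine vectors is missing.
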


\begin{proof}
First observe that, for a constant vector $v$, the inequality \eq{E:thm} holds with equality. From now on let $v \in \bigcup_{n=1}^\infty \R_+^n$ be a fixed, nonconstant vector. Denote $\gamma:=\frac{\max v}{\min v}$. Then $\gamma>1$ and hence $\ln\gamma>0$.

For the sake of brevity, denote the right hand side of the Cargo--Shisha inequality \eq{CS} by $K$. To prove that inequality \eq{E:thm} is valid for all $p,q \in \R$ with $q \le p$, observe that both sides are continuous functions of $(p,q)$. Therefore, without loss of generality, we may suppose that $p,q \in \R \setminus \{0\}$ and $p \ne q$.

The proof is based on the following identity: 
\Eq{*}{
K&= \Bigg( \dfrac{ q \gamma^{\frac{p+q}{2}} \big(\gamma ^{\frac{p-q}2}-\gamma^{\frac{q-p}2}\big)}{(p-q)\gamma^{\frac{q}{2}}(\gamma^{\frac{q}{2}}-\gamma^{-\frac{q}{2}})} \Bigg)^{\frac{1}{p}}
\Bigg( \dfrac{(p-q)\gamma^{\frac{p}{2}}(\gamma^{\frac{p}{2}}-\gamma^{-\frac{p}{2}})}{ p \gamma^{\frac{p+q}{2}} \big(\gamma ^{\frac{p-q}2}-\gamma^{\frac{q-p}2}\big)} \Bigg)^{\frac{1}{q}}\\
&= 
\bigg(\frac{\sinh(\tfrac{p}{2}\ln \gamma)}{\tfrac{p}{2}\ln \gamma}\bigg)^{\frac{1}{q}}
\bigg(\frac{\sinh(\tfrac{q}{2}\ln \gamma)}{\tfrac{q}{2}\ln \gamma}\bigg)^{-\frac{1}{p}}
\bigg(\frac{\sinh(\tfrac{p-q}{2}\ln \gamma)}{\tfrac{p-q}{2}\ln \gamma}\bigg)^{\frac{1}{p}-\frac{1}{q}}
}
If we now substitute $p_0:=\tfrac{p}2 \ln \gamma$ and $q_0:=\tfrac{p}2 \ln \gamma$, then we have
\Eq{*}{
K^{\tfrac{2}{\ln\gamma}}=\bigg(\frac{\sinh p_0}{p_0}\bigg)^{\frac1{q_0}}
\bigg(\frac{\sinh q_0}{q_0}\bigg)^{-\frac1{p_0}}
\bigg(\frac{\sinh(p_0-q_0)}{p_0-q_0}\bigg)^{\frac1{p_0}-\frac1{q_0}}.
}
Taking logarithm side by side and then using the notation \eq{E:f}, we arrive at
\Eq{*}{
\frac{2\ln K}{\ln \gamma}&=\frac{1}{q_0} \Big( f(p_0)+ \frac{p_0^2}6\Big)- \frac{1}{p_0} \Big( f(q_0)+ \frac{q_0^2}6\Big)+
\Big( \frac{1}{p_0}-\frac{1}{q_0}\Big)\Big( f(p_0-q_0)+ \frac{(p_0-q_0)^2}6\Big)\\
&=\frac{f(p_0)}{q_0}-\frac{f(q_0)}{p_0}+\Big( \frac{1}{p_0}-\frac{1}{q_0}\Big) f(p_0-q_0)+\frac{p_0^3-q_0^3+(q_0-p_0)^3}{6p_0q_0}.
}
But, in view of Lemma~\ref{fuu} (with $p:=p_0$, $q:=q_0$ and observing that $q_0<p_0$), we obtain 
\Eq{*}{
\frac{f(p_0)}{q_0}-\frac{f(q_0)}{p_0}+\Big( \frac{1}{p_0}-\frac{1}{q_0}\Big) f(p_0-q_0) \le 0.
}
Furthermore, we also have
\Eq{*}{
\frac{p_0^3-q_0^3+(q_0-p_0)^3}{6p_0q_0}
=\frac{p_0-q_0}{2}.
}
Thus, combining above statements,
\Eq{*}{
\frac{2\ln K}{\ln \gamma} \le \frac{p_0-q_0}2=\frac{\ln \gamma}{4} (p-q).
}
The latter inequality is equivalent to $K\le \exp \big( \frac{p-q}{8} (\ln \gamma)^2\big)$ which, in view of \eq{CS} and the definition of $K$, completes the proof of \eq{E:thm}.

To verify the sharpness of the constant, let $q<p$ with $pq\neq0$. Assume that the inequality \eq{E:thm} holds with a real constant $C$ (instead of $\frac{p-q}8$) for all $v\in\R_+^2$. Then, with the substitution $v:=(e^t,e^{-t})$, where $t\in\R$, and taking the logarithm side by side, this inequality implies that
\Eq{*}{
  \frac{\ln\cosh(tp)}{p}-\frac{\ln\cosh(tq)}{q}
  \leq 4Ct^2 \qquad(t\in\R).
}
Therefore, the function $t\mapsto \frac{\ln\cosh(tp)}{p}-\frac{\ln\cosh(tq)}{q}- 4Ct^2$ has a maximum at $t=0$, which implies that its second derivative is nonpositive at $t=0$. This implies that $p-q\leq 8C$.
\end{proof}

Using the conjugacy principle \eq{conj}, we immediately obtain a similar statement concerning exponential means.
\begin{cor}
Let $p,q \in \R$ with $q\le p$. Then 
\Eq{*}{
\E_p(v)-\E_q(v) \le \frac{p-q}8 (\max v-\min v)^2 \qquad \text{ for all } v \in \bigcup_{n=1}^\infty \R^n.
}
Moreover, the constant $\frac{p-q}8$ is sharp.  
\end{cor}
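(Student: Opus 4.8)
The plan is to derive both assertions directly from the main Theorem via the conjugacy principle \eq{conj}. Fix $v\in\R^n$ and introduce the associated positive vector $w:=(\exp v_1,\dots,\exp v_n)\in\R_+^n$. By \eq{conj}, we have $\E_p(v)=\ln\P_p(w)$ and $\E_q(v)=\ln\P_q(w)$, so that
\Eq{*}{
\E_p(v)-\E_q(v)=\ln\bigg(\frac{\P_p(w)}{\P_q(w)}\bigg).
}
Since the exponential is strictly increasing, $\max w=\exp(\max v)$ and $\min w=\exp(\min v)$, and hence $\ln\big(\tfrac{\max w}{\min w}\big)=\max v-\min v$. Applying the estimate \eq{E:thm} of the Theorem to $w$ and taking the logarithm of both sides then gives
\Eq{*}{
\E_p(v)-\E_q(v)\le\frac{p-q}8\bigg(\ln\Big(\frac{\max w}{\min w}\Big)\bigg)^2=\frac{p-q}8(\max v-\min v)^2,
}
which is precisely the claimed inequality.

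For the sharpness, I would argue contrapositively. Suppose the inequality held with some constant $C$ in place of $\tfrac{p-q}8$ for all $v\in\bigcup_{n=1}^\infty\R^n$. Running the substitution backward---given an arbitrary $w\in\R_+^n$, set $v:=(\ln w_1,\dots,\ln w_n)$---the same chain of identities recovers the inequality \eq{E:thm} of the Theorem with $C$ in place of $\tfrac{p-q}8$. Since that constant is sharp in the Theorem, one concludes $C\ge\tfrac{p-q}8$, which establishes sharpness here as well.

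I expect no genuine obstacle in this argument: the entire proof is a transparent translation through \eq{conj}, and the only point requiring care is the elementary bookkeeping that $\exp$ commutes with $\max$ and $\min$, so that the logarithm of the extremal ratio of $w$ becomes the range $\max v-\min v$ of $v$. Everything else is a direct invocation of the Theorem together with the strict monotonicity of $\exp$ and $\ln$.
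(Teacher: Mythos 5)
Your proposal is correct and follows exactly the route the paper intends: the paper proves the corollary by invoking the conjugacy principle \eq{conj}, and your argument simply makes explicit the substitution $w=\exp v$ (and its inverse for sharpness) that the paper leaves as immediate. Both the inequality and the sharpness transfer are handled precisely as the paper's one-line proof envisions.
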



\end{document}